\newtheorem{thm}{Theorem}[section]
\newtheorem{defn}{Definition}[section]
\newtheorem{prop}{Proposition}[section]
\newtheorem{lemma}{Lemma}[section]
\newcommand{\Rmnum}[1]{ \uppercase\expandafter{\romannumeral  #1}}
\numberwithin{equation}{section}
\newcommand{\mysectionname}{}
\newcommand{\newsection}[1]{\section{#1}\renewcommand{\mysectionname}{\uppercase{#1}}}
\begin{document}
\title{On series of free $R$-diagonal operators}
\author{Hari Bercovici and Ping Zhong}
\address{Department of Mathematics, Rawles Hall, 831 East Third Street, Indiana University, Bloomington, Indiana 47405, U.S.A. }
\email{bercovic@indiana.edu}
\email{pzhong@indiana.edu}
\begin{abstract}
For a series of free $R$-diagonal operators, we prove an analogue of
the three series theorem. We show that a series of free $R$-diagonal
operators converges almost uniformly if and if two numerical series
converge.
\end{abstract}

\maketitle
\newsection{Introduction}
Free probability theory, introduced by Voiculescu, has many
similarities with the classic probability theory. In this theory,
free independence replaces the usual one. The free central limit
theorem was proved by Voiculescu in \cite{[DVV1985]}. Later, other
limit laws were studied by different authors. 
Pata and the first-named author proved that there is one-to-one correspondence of the
weak limits between the free case and the classic case
in \cite{[BP1999],[BP2000]}. 
One might be interested in
other types of limit theorems in free probability. In
\cite{[B2005]}, the first-named author proved an analogue of the three series
theorem, which characterizes the almost uniform convergence of a
series of random variables by the convergence of three numerical
series. In this article, we consider series of $R$-diagonal
operators. We prove that the uniform convergence of a series of
$R$-diagonal operators is equivalent to the convergence of two
numerical series. To prove this result, we use the tools introduced
recently by Nica and Noyes and the first-named author.

We review some results of \text{\footnotesize{$\boxplus$}}-convolution in the Section
\ref{boxplus}. We recall the definition of the
\text{\footnotesize{$\boxplus_{RD}$}}-convolution, the properties of this convolution and
prove some relevant theorems in the Section \ref{RDconvolution}. We
prove our main theorem in the Section \ref{convergence}.
\section{Free independence and the \text{\footnotesize{$\boxplus$}}-convolution}\label{boxplus}
We use the notation $(\mathcal{M}, \tau)$ to denote a tracial
non-commutative W*-probability space, where $\mathcal{M}$ is a von
Neumann algebra, and $\tau$ is a faithful normal tracial state. 
We assume that $\mathcal{M}$ acts on a Hilbert space $H$. 
The collection of closed, densely defined operators affiliated with
$\mathcal{M}$ is a *-algebra, which we denote by
$\widetilde{\mathcal{M}}$. An operator in $\widetilde{\mathcal{M}}$
is also called a random variable. If $X$ is a closed densely defined
operator on $H$ and $X$ has the polar decomposition $X=u|X|$, then
$X$ is affiliated with $\mathcal{M}$ if and only if $u$ and the
spectral projections $e_{\lambda}$ of $|X|$ are in $\mathcal{M}$. If
$X$ is a self-adjoint operator, the distribution $\mu_X$ of $X$ is
the probability measure on $\mathbb{R}$ so that
$\mu_X(\sigma)=\tau(e_X(\sigma))$.

In Voiculescu's free probability theory, a family subalgebras
$(\mathcal{M}_i)_{i\in I}$ of $\mathcal{M}$ is said to be free if,
given a natural number $n$ and elements $X_1 \in \mathcal{M}_{i_1}$,
$X_2 \in \mathcal{M}_{i_2}$, $\cdots$, $X_n \in \mathcal{M}_{i_n}$,
such that $\tau(X_1)=\tau(X_2)=\cdots=\tau(X_n)=0$ and $i_j\neq
i_{j+1}$ for $1\leq j <n$, we have $\tau(X_1 X_2\cdots X_n)=0$.
Consider operators $X_i=U_i|X_i| \in \widetilde{\mathcal{M}}$, $i\in I$,
the family $(X_i)_{i\in I}$ is said to be *-free if the family $(A_i)_{i\in I}$
is a free family of algebra, where $A_i$ is the smallest unital von Neumann subalgebra
containing $U_i$ and the spectral projections of $|X_i|$.
The free additive convolution \text{\footnotesize{$\boxplus$}} is an operation on probability
measures on the real line such that, if $X_1$ and $X_2$ are free
self-adjoint random variables, we have
$\mu_{X_1+X_2}=\mu_{X_1}$\text{\footnotesize{$\boxplus$}}$ \mu_{X_2}$.

Now we review the calculation of the free additive convolution. Denote by $\mathbb{C}$ the complex plane,
and $\mathbb{C}^+=\{z\in \mathbb{C}: \Im{z}>0 \}$. Given a probability measure
$\mu$ on $\mathbb{R}$, the Cauchy transform of $\mu$ is defined as
\begin{equation}\nonumber
G_\mu(z)=\int_{-\infty}^{\infty}\frac{d\mu(t)}{z-t}, \hspace{6pt} z \in \mathbb{C}^+ .
\end{equation}
Given two positive numbers $\alpha$ and $\beta$, let us denote
$\Gamma_{\alpha,\beta}=\{z=x+iy \in \mathbb{C}^+: y>\alpha|x|, |z|
>\beta \}$. Set $F_{\mu}(z)=1/G_\mu(z)$, then
$F^{-1}_\mu(z)$ exists in some domain of the form
$\Gamma_{\alpha,\beta}$, where $F^{-1}_\mu(z)$ is the right inverse of
$F_\mu(z)$ with respect to composition. 
The Voiculescu transform of
$\mu$ is defined as $\varphi_\mu(z):=F^{-1}_\mu(z)-z$. Let $\mu_1,
\mu_2$ be two probability measures on $\mathbb{R}$, and $\mu_1$\text{\footnotesize{$\boxplus$}}$\mu_2$
be their free additive convolution. A remarkable
property of the Voiculescu transform is
\begin{equation}\nonumber
\varphi_{\mu_1 \boxplus
\mu_2}(z)=\varphi_{\mu_1}(z)+\varphi_{\mu_2}(z).
\end{equation}
The above equation is valid in some domain of the form
$\Gamma_{\alpha,\beta}$. The $\mathcal{R}$-transform of $\mu$ is an
analytic function defined as
$\mathcal{R}_{\mu}(z)=\varphi_\mu(1/z)$.

Weak convergence of probability measure can be translated in terms of the corresponding $\varphi$-functions.
The following theorem is from \cite{[BV1993]}.
\begin{thm}\label{conv1}
Let $\{\mu_n\}_{n=1}^{\infty}$ be a sequence of probability measures on $\mathbb{R}$. The following assertions are equivalent.
\begin{enumerate}[$(1)$]
\item\label{1} $\mu_n \rightarrow \mu$ weakly,
\item\label{2} There exist $\alpha, \beta>0$ and a function $\varphi$ such that $\varphi_{\mu_n}\rightarrow \varphi$
uniformly on the compact subsets of $\Gamma_{\alpha,\beta}$.
\end{enumerate}
Moreover, if $(1)$ and $(2)$ are satisfied, we have
$\varphi=\varphi_\mu$ in $\Gamma_{\alpha,\beta}$.
\end{thm}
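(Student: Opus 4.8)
The plan is to pass through the chain of transforms $\mu\mapsto G_\mu\mapsto F_\mu\mapsto F_\mu^{-1}\mapsto\varphi_\mu$, showing that convergence is preserved at each stage, and then to reverse the chain for the converse. For $(1)\Rightarrow(2)$, first I would observe that for each fixed $z\in\mathbb{C}^+$ the function $t\mapsto 1/(z-t)$ is bounded and continuous on $\mathbb{R}$, so the weak convergence $\mu_n\to\mu$ yields the pointwise convergence $G_{\mu_n}(z)\to G_\mu(z)$ on $\mathbb{C}^+$. Since $|G_{\mu_n}(z)|\le 1/\Im z$, the family $\{G_{\mu_n}\}$ is locally bounded, hence normal, and pointwise convergence of a normal family upgrades to convergence uniform on compact subsets of $\mathbb{C}^+$ by Vitali's theorem. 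Because $G_\mu$ maps $\mathbb{C}^+$ into the lower half-plane it never vanishes, so $F_{\mu_n}=1/G_{\mu_n}\to F_\mu=1/G_\mu$ locally uniformly on $\mathbb{C}^+$ as well.

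The substantive step is transferring this convergence to the right inverses. Here I would use the two structural facts about reciprocals of Cauchy transforms: $\Im F_\mu(z)\ge\Im z$ on $\mathbb{C}^+$, and $F_\mu(z)/z\to 1$ as $z\to\infty$ nontangentially. The key technical lemma is that these asymptotics are uniform over a weakly convergent (hence tight) sequence: given $\alpha>0$, one can choose $\beta,M>0$ so that every $F_{\mu_n}$ (for $n$ large) and $F_\mu$ is univalent on $\Gamma_{\alpha,\beta}$ and its image contains a common cone $\Gamma_{2\alpha,M}$, on which the inverse $F_{\mu_n}^{-1}$ is therefore defined. Granting uniform univalence on a fixed cone together with the local uniform convergence $F_{\mu_n}\to F_\mu$, a standard argument via Rouch\'e's theorem shows that $F_{\mu_n}^{-1}\to F_\mu^{-1}$ uniformly on compact subsets of a slightly smaller cone. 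Subtracting $z$ gives $\varphi_{\mu_n}\to\varphi_\mu$ uniformly on compacts of a cone $\Gamma_{\alpha',\beta'}$, which is $(2)$ together with the identification $\varphi=\varphi_\mu$.

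For $(2)\Rightarrow(1)$, I would run the chain backwards. Uniform convergence of $\varphi_{\mu_n}=F_{\mu_n}^{-1}-z$ on compacts of $\Gamma_{\alpha,\beta}$, combined with the uniform univalence estimates, yields local uniform convergence of $F_{\mu_n}$, hence of $G_{\mu_n}=1/F_{\mu_n}$, on a subregion of $\mathbb{C}^+$; since $\{G_{\mu_n}\}$ is again a normal family on all of $\mathbb{C}^+$, the identity theorem propagates this to locally uniform convergence on $\mathbb{C}^+$. By Helly's selection theorem any subsequence of $\{\mu_n\}$ has a vaguely convergent further subsequence whose limit is a subprobability measure, and pointwise convergence of the Cauchy transforms forces that limit to have the prescribed Cauchy transform, hence to be unique. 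The decisive and most delicate point is tightness, i.e.\ ruling out escape of mass to infinity so that the limit is a genuine probability measure. The relevant criterion is that a subprobability measure $\nu$ is in fact a probability measure precisely when $iy\,G_\nu(iy)\to 1$, equivalently $F_\nu(iy)/(iy)\to 1$, as $y\to\infty$; since the hypothesis controls $F_{\mu_n}^{-1}$, and hence $F_{\mu_n}$, uniformly along nontangential directions in the cone, this normalization passes to the limit uniformly in $n$, so no mass is lost. Once tightness is established, $\mu_n\to\mu$ weakly for a probability measure $\mu$, and applying the already-proved direction $(1)\Rightarrow(2)$ identifies $\varphi=\varphi_\mu$.

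I expect the main obstacle to be the two uniformity statements: the uniform choice of the cone $\Gamma_{\alpha,\beta}$ on which all the inverses $F_{\mu_n}^{-1}$ simultaneously exist and converge (for $(1)\Rightarrow(2)$), and the uniform tightness estimate that prevents loss of mass (for $(2)\Rightarrow(1)$). Both reduce to making the nontangential asymptotic $F_\mu(z)\sim z$ quantitative and uniform over the sequence, which is the genuine analytic content of the theorem; the remaining passages through the transform chain are routine applications of normal-family and inverse-function arguments.
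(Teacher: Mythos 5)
The paper offers no proof of this theorem: it is quoted from \cite{[BV1993]} (Proposition 5.7 there), so your proposal must be measured against that argument. Your direction $(1)\Rightarrow(2)$ is essentially the known proof: weak convergence gives local uniform convergence of $G_{\mu_n}$, tightness makes the asymptotics $F_{\mu_n}(z)=z(1+o(1))$ uniform in $n$, which produces a common truncated cone of univalence whose images contain a common cone, and a Rouch\'e/Hurwitz argument transfers convergence to the inverses; this is fine. The genuine gap is in $(2)\Rightarrow(1)$, at exactly the step you yourself single out as decisive. You assert that the hypothesis ``controls $F_{\mu_n}^{-1}$, and hence $F_{\mu_n}$, uniformly along nontangential directions,'' but uniform convergence on \emph{compact} subsets of $\Gamma_{\alpha,\beta}$ gives no control near infinity that is uniform in $n$: each $\varphi_{\mu_n}(z)=o(z)$ individually, but the uniformity of this smallness over $n$ is precisely equivalent to tightness and cannot be extracted from compact convergence. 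This is why Proposition 5.7 of \cite{[BV1993]} includes in its condition $(2)$ the additional hypothesis that $\varphi_{\mu_n}(z)=o(|z|)$ \emph{uniformly in} $n$ as $z\to\infty$ in $\Gamma_{\alpha,\beta}$; the restatement in the present paper silently drops that clause, and your argument cannot manufacture it.

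Moreover, the loss is not reparable by a cleverer argument, because $(2)\Rightarrow(1)$ as literally stated is false. Take
\begin{equation*}
\mu_n=\tfrac{1}{2}\delta_0+\tfrac{1}{4}\delta_{c_n}+\tfrac{1}{4}\delta_{-c_n},\qquad c_n\to\infty,
\end{equation*}
so that $F_{\mu_n}(z)=z(z^2-c_n^2)/(z^2-\tfrac{1}{2}c_n^2)$. This is a rational Pick function whose critical points are all real, so the component of $F_{\mu_n}^{-1}(\mathbb{C}^+)$ containing $iy$ for large $y$ is mapped conformally onto $\mathbb{C}^+$; hence the right inverse normalized by $F_{\mu_n}^{-1}(z)\sim z$ at $i\infty$ is defined on all of $\mathbb{C}^+$, and on any fixed compact set one computes $F_{\mu_n}^{-1}(z)\to z/2$. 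Thus $\varphi_{\mu_n}(z)\to -z/2$ uniformly on compact subsets of every $\Gamma_{\alpha,\beta}$, yet half of the mass escapes to $\pm\infty$ and $\mu_n$ has no weak limit. Note that the limit $-z/2$ is not $o(z)$: if one restores the uniform $o(|z|)$ clause of \cite{[BV1993]}, or even only assumes that the limit function satisfies $\varphi(z)=o(z)$, then tightness does follow --- for instance by letting $n\to\infty$ in the identity $F_{\mu_n}(z+\varphi_{\mu_n}(z))=z$ along a vaguely convergent subsequence with limit mass $m$, which forces the limiting behavior $F(w)\sim w/m$ and hence $m=1$; that is the corrected form of your tightness step. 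A secondary flaw in the same direction: you invoke ``uniform univalence estimates'' to recover local uniform convergence of $F_{\mu_n}$ from that of $F_{\mu_n}^{-1}$, but in the forward direction those estimates were consequences of tightness, which at this point of the converse is not yet available; the composition identity just mentioned is the non-circular route back to $G_{\mu_n}$ and the Helly argument.
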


Almost uniform convergence is an appropriate
replacement of almost sure convergence in classical probability
theory. We say that a sequence $X_n$ in $\widetilde{\mathcal{M}}$
converges almost uniformly to zero, if for every $\varepsilon >0$,
there is a projection $p\in \mathcal{M}$ with $\tau(1-p) <
\varepsilon$ such that $\|X_n p \| \rightarrow 0$ as $n \rightarrow
\infty$.
The measure topology extends the classical concept of convergence in probability.
This topology was introduced by Nelson in \cite{[Nel1974]}, one may also
find an excellent exposition in \cite{[Takesaki03]}.
\begin{defn}
The measure topology of $\mathcal{M}\ ($with respect to $\tau)$ is the uniform topology given a neighborhood system
$\{x+N(\varepsilon, \delta) : \varepsilon, \delta >0 \}$, $x \in \mathcal{M}$ where
$N(\varepsilon, \delta)=\{a \in \mathcal{M} : \|ap\|<\varepsilon$ and  $\tau(p^{\perp}) \leq \delta$
for some projection p in $\mathcal{M}\}$. We say $X_n$ converges in measure to $X$ if $X_n$ converges to $X$
in this topology.
\end{defn}

It is clear that convergence in measure is weaker than almost uniform convergence. When $X_n$, $X$ are self-adjoint
random variables, then $X_n \rightarrow X$ in measure implies $\mu_{X_n} \rightarrow \mu_X$ weakly.

\section{$R$-diagonal operators and the \text{\footnotesize{$\boxplus_{RD}$}}-convolution}\label{RDconvolution}
The class of $R$-diagonal elements in free probability was introduced by Nica and Speicher 
in \cite{[NS1997]}.  
This notion has a natural extension
to the unbounded case. Let us recall the following definitions in \cite{[HS2007]}.

\begin{defn}
\begin{enumerate}[$(1)$]
\item Let $S, T \in \widetilde{\mathcal{M}}$, we say $S$ and $T$ have the same *-distribution if there exists a trace-preserving
*-isomorphism $\phi$ from $W^*(S)$ onto $W^*(T)$ with $\widetilde{\phi}(S)=T$, where $\widetilde{\phi}$ is the natural extension of
$\phi$ to unbounded operators.
\item $T\in \widetilde{\mathcal{M}}$ is said to be $R$-diagonal if there exist a von Neumann algebra $\mathcal{N}$, with
a faithful, normal, tracial state and *-free elements $U$ and $H$ in $\widetilde{\mathcal{N}}$, such that $U$ is
Haar unitary, $H \geq 0$, and such that $T$ has the same *-distribution as $UH$.
\end{enumerate}
\end{defn}

It is well-known that if $T\in \widetilde{\mathcal{M}}$ is
$R$-diagonal with ker$(T)=0$, and the partial decomposition of $T$ is
$T=u|T|$, then $u$ is a Haar unitary which is *-free from $|T|$. The
distribution of an $R$-diagonal operator $T$ is uniquely determined by $|T|$. Given two
$R$-diagonal operators $X, Y \in \widetilde{\mathcal{M}}$, such that
$X$ and $Y$ are *-free, then the operator $X+Y \in
\widetilde{\mathcal{M}}$ is also $R$-diagonal. In \cite{[BN2012]},
Nica and Noyes and the first-named author introduced a new kind of convolution of
probability measures on the positive real line $\mathbb{R}^+$, which
they denoted by \text{\footnotesize{$\boxplus_{RD}$}}. The defining property of this new
convolution is $\mu_{Z^*Z}=\mu_{X^*X}$\text{\footnotesize{$\boxplus_{RD}$}}$\mu_{Y^*Y}$,
if $X$ and $Y$ are *-free $R$-diagonal operators and $Z=X+Y$.
We will follow the paper \cite{[BN2012]} to describe the functional
which linearizes \text{\footnotesize{$\boxplus_{RD}$}}-convolution.

For convenience, given two positive numbers $\alpha$ and $\beta$, we
denote $\Lambda_\alpha=\{z=x+iy \in \mathbb{C}: x>0,
|y|<\alpha|x|\}$ and $\Lambda_{\alpha, \beta}=\{z \in
\mathbb{C}:|z|>\beta \} \backslash \Lambda_\alpha$. Let $\mu$ be a
probability measure $\mu$ on $[0, +\infty)$, then its Cauchy transform $G_\mu$
is defined in $\mathbb{C}\backslash [0, +\infty)$ and its
Voiculescu transform $\varphi_\mu$ is defined in some domain of the form
$\Lambda_{\alpha, \beta}$. It is known that for $z$ in a domain $\Lambda_{\alpha, \beta}$,
we have $\lim_{|z|\rightarrow \infty}\varphi_\mu(z)/z =0$.
We set
\begin{equation}\nonumber
V_\mu(z)=\frac{1}{z}\left(1+\frac{1}{z}\varphi_\mu \left(z \right)
\right),
\end{equation}
and let $W_\mu(z)=1/V_\mu(z)$, then $W_\mu^{-1}(z)$ exists in some domain of the
form $\Lambda_{\alpha, \beta}$. The $\varphi\varphi$-transform of
$\mu$ is defined by $\varphi\varphi_\mu(z)=W_\mu^{-1}(z)-z$. This transform
linearizes the \text{\footnotesize{$\boxplus_{RD}$}}-convolution, namely if $\mu_1$ and
$\mu_2$ are two probability measures on $[0, +\infty)$, then
$\varphi\varphi_{\mu_1\boxplus_{RD}\mu_2}(z)=\varphi\varphi_{\mu_1}(z)+\varphi\varphi_{\mu_2}(z)$
in some domain of the form $\Lambda_{\alpha, \beta}$. One might find
detailed description of this definition in \cite{[BN2012]}.

The weak convergence of probability measures is equivalent to convergence properties
of the corresponding $\varphi\varphi$-transforms.
\begin{thm}\label{conv2}
Let $\{\mu_n\}_{n=1}^{\infty}$ be a sequence of probability measures on $\mathbb{R}^+$. The following assertions are equivalent.
\begin{enumerate}[$(1)$]
\item\label{3} $\mu_n \rightarrow \mu$ weakly,
\item\label{4} There exist two positive numbers $\alpha$, $\beta$, a domain of the form $\Lambda_{\alpha, \beta}$
and a function $\varphi\varphi$ such that
$\varphi\varphi_{\mu_n}\rightarrow \varphi\varphi$
uniformly on the compact subsets of $\Lambda_{\alpha, \beta}$.
\end{enumerate}
Moreover, if $(1)$ and $(2)$ are satisfied, we have
$\varphi\varphi=\varphi\varphi_\mu$ in $\Lambda_{\alpha, \beta}$ and $sup_n|\varphi\varphi_{\mu_n}(z)|=o(z)$ as $z \rightarrow
\infty$ for $z \in$ $\Lambda_{\alpha, \beta}$.
\end{thm}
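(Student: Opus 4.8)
The plan is to reduce Theorem~\ref{conv2} to Theorem~\ref{conv1} by exploiting the explicit analytic recipe that manufactures the $\varphi\varphi$-transform out of the ordinary Voiculescu transform. Reading off the definitions, one passes from $\varphi_\mu$ to $\varphi\varphi_\mu$ through the chain $\varphi_\mu \mapsto V_\mu \mapsto W_\mu \mapsto W_\mu^{-1} \mapsto \varphi\varphi_\mu$, where $V_\mu(z) = z^{-1}(1 + z^{-1}\varphi_\mu(z))$, $W_\mu = 1/V_\mu$, and $\varphi\varphi_\mu(z) = W_\mu^{-1}(z) - z$. Every arrow is an analytic operation, and the chain can be run backwards: from $\varphi\varphi_\mu$ one recovers $W_\mu^{-1}(z) = z + \varphi\varphi_\mu(z)$, then $W_\mu$ by compositional inversion, then $V_\mu = 1/W_\mu$, and finally $\varphi_\mu(z) = z^2 V_\mu(z) - z$. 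Since each operation is continuous for convergence uniform on compact sets (with care at the inversion steps), the equivalence of weak convergence and of $\varphi\varphi$-convergence should follow once the corresponding equivalence for $\varphi$ is transported through the chain in both directions.

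To prove $(1)\Rightarrow(2)$, I would start from $\mu_n\to\mu$ weakly. Because every $\mu_n$ is carried by $[0,+\infty)$, weak convergence entails tightness, and tightness should yield a uniform bound $\sup_n |\varphi_{\mu_n}(z)/z| \to 0$ as $|z|\to\infty$ inside a common domain $\Lambda_{\alpha,\beta}$; this is the uniform version of the asymptotic $\varphi_\mu(z)/z\to 0$ recorded before the theorem. Theorem~\ref{conv1} simultaneously gives $\varphi_{\mu_n}\to\varphi_\mu$ uniformly on compacts after shrinking to a truncated cone $\Gamma_{\alpha',\beta'}\subset\Lambda_{\alpha,\beta}$. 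Feeding this into the chain, $V_{\mu_n}\to V_\mu$ and, by the uniform estimate, $V_{\mu_n}(z)$ is uniformly close to $z^{-1}$ near infinity, so $W_{\mu_n}=1/V_{\mu_n}$ is univalent and uniformly close to the identity there. Hence the inverses $W_{\mu_n}^{-1}$ are all defined on a common $\Lambda_{\alpha'',\beta''}$, and a normal-families argument delivers $W_{\mu_n}^{-1}\to W_\mu^{-1}$, whence $\varphi\varphi_{\mu_n}\to\varphi\varphi_\mu$ uniformly on compacts. The same uniform closeness-to-identity should yield the stated bound $\sup_n|\varphi\varphi_{\mu_n}(z)|=o(z)$.

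For the converse $(2)\Rightarrow(1)$, I would run the chain backwards. Assuming $\varphi\varphi_{\mu_n}\to\varphi\varphi$ uniformly on compacts of $\Lambda_{\alpha,\beta}$, one has $W_{\mu_n}^{-1}(z)=z+\varphi\varphi_{\mu_n}(z)\to z+\varphi\varphi(z)=:G(z)$ uniformly on compacts; since $G(z)\approx z$ near infinity, $G$ is univalent there, and a Hurwitz/inverse-function-theorem argument produces the compositional inverses $W_{\mu_n}$ on a common domain with $W_{\mu_n}\to W=G^{-1}$. Then $V_{\mu_n}=1/W_{\mu_n}\to V$ and $\varphi_{\mu_n}(z)=z^2V_{\mu_n}(z)-z\to z^2V(z)-z$ uniformly on compacts of a domain containing some $\Gamma_{\alpha',\beta'}$. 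Theorem~\ref{conv1} then forces $\mu_n\to\mu$ weakly, and uniqueness of limits along the chain identifies $\varphi\varphi=\varphi\varphi_\mu$.

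The hard part will be the compositional-inversion step, which is where the argument is more than bookkeeping. Uniform-on-compacts convergence of the $W_{\mu_n}$ does not by itself guarantee that their compositional inverses are even defined on a common domain, let alone that they converge there. The remedy is to first establish uniform univalence and uniform approximation to the identity for the $W_{\mu_n}$ near infinity---equivalently, to prove the uniform estimate $\sup_n|\varphi\varphi_{\mu_n}(z)|=o(z)$---and then to transfer convergence to the inverses by a Hurwitz or Rouch\'e argument on a fixed $\Lambda_{\alpha,\beta}$. These uniform bounds come from tightness in the forward direction and from the hypothesis of uniform convergence together with the near-identity asymptotics in the backward direction; securing them is the technical heart of the proof, after which the remaining steps are routine continuity of analytic operations.
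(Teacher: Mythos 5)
The paper does not actually prove Theorem~\ref{conv2}: it is quoted from the preprint \cite{[BN2012]}, so there is no in-paper argument to compare against, and your proposal has to be judged on its own merits. Your forward direction $(1)\Rightarrow(2)$ is sound in outline and close to how such statements are established (cf.\ Proposition 5.7 of \cite{[BV1993]} for $\varphi$): weak convergence gives tightness, tightness gives a common domain $\Lambda_{\alpha,\beta}$ and the uniform estimate $\sup_n|\varphi_{\mu_n}(z)|=o(|z|)$, and then the chain $\varphi\mapsto V\mapsto W\mapsto W^{-1}$ can be inverted uniformly near infinity. (Two technical points you should not gloss over: Theorem~\ref{conv1} delivers convergence on truncated cones $\Gamma_{\alpha',\beta'}$ in $\mathbb{C}^+$, while you need it on compacts of the larger region $\Lambda_{\alpha,\beta}$, which requires a Vitali/normal-families extension; and $\Lambda_{\alpha,\beta}$ is not convex, so univalence of near-identity maps there needs a quasi-convexity argument rather than the usual $|W'-1|<1$ criterion.)

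The genuine gap is in the backward direction $(2)\Rightarrow(1)$, precisely at the point you yourself flag as the ``technical heart.'' You claim the uniform bounds $\sup_n|\varphi\varphi_{\mu_n}(z)|=o(z)$ follow ``from the hypothesis of uniform convergence together with the near-identity asymptotics,'' but this cannot work as stated: uniform convergence on \emph{compact} subsets of the unbounded domain $\Lambda_{\alpha,\beta}$ carries no information uniform in $n$ as $z\to\infty$, and while each individual $\varphi\varphi_{\mu_n}$ is $o(z)$ in its own right, nothing forces this to hold uniformly in $n$. Without the uniform estimate you cannot guarantee that the images $W_{\mu_n}^{-1}(\Lambda_{\alpha,\beta})$ eventually contain a \emph{common} truncated cone $\Gamma_{\alpha',\beta'}$ on which all $\varphi_{\mu_n}$ are defined and converge (your Hurwitz/Rouch\'e step only yields convergence of $W_{\mu_n}$ on compact subsets of the image of the limit map $G=z+\varphi\varphi$, whose shape near infinity is exactly what is unknown), nor can you rule out mass escaping to infinity --- note that Lemma~\ref{tight}, the paper's tightness criterion, requires precisely this uniform $o(z)$ bound as input. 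This is not a quibble: in the analogous classical result, Proposition 5.7 of \cite{[BV1993]}, the uniform $o(|z|)$ condition appears as an explicit \emph{hypothesis} in the function-theoretic condition, whereas Theorem~\ref{conv2} lists it among the \emph{conclusions}; so the intended proof must extract it from structural properties of $\varphi\varphi$-transforms rather than from formal chain-chasing. One workable mechanism is the correspondence between measures $\mu$ on $\mathbb{R}^+$ and symmetric measures on $\mathbb{R}$ encoded in \eqref{phiAndphiphi} and Proposition~\ref{phiandphi}: the identity $\varphi_{\mu_{a}}(z)=(2z)^3\varphi\varphi_{\mu}((2z)^2)$ transports condition $(2)$ for $\varphi\varphi_{\mu_n}$ into the corresponding condition for the Voiculescu transforms of symmetric measures, where the known theory applies and the uniform estimate can be recovered; if you take this route, be careful that the unbounded case of Proposition~\ref{phiandphi} is proved in the paper \emph{using} Theorem~\ref{conv2}, so you must establish the identity independently to avoid circularity.
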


We would also like to recall some basic combinatorial theory of bounded $R$-diagonal operators.
If $X$ is a bounded $R$-diagonal operator, $X=a+bi$, where $a, b$ are self-adjoint operators, then $X$ is a $R$-diagonal operator
if and only if the joint $R$-transform of $(X, X^*)$ is a series $R_{X, X^*}$ in two non-commuting indeterminates $z$ and $z^*$
of the form
\begin{equation}\label{RRequ}
R_{X, X^*}(z, z^*)=\sum_{n=1}^{\infty}\alpha_n(zz^*)^n + \sum_{n=1}^{\infty}\alpha_n(z^*z)^n.
\end{equation}
In other words, the only non-vanishing cumulants of $(X, X^*)$ are
\begin{equation}\nonumber
k_{2n}(X,X^*,\cdots,X,X^*)=k_{2n}(X^*, X,\cdots, X^*, X)=\alpha_n, \,\,n \in \mathbb{N}.
\end{equation}
We denote $f_X(z):=\sum_{n=1}^{\infty}\alpha_n z^n$. As indicated in \cite{[BN2012]},
we have
\begin{equation}\label{fANDvarphi}
f_X(z)=\frac{1}{z}\varphi\varphi_{\mu_{X^*X}}\left(\frac{1}{z}\right).
\end{equation}

Given a bounded self-adjoint operator $a \in \mathcal{M}$, we set $R_a(z):=\sum_{n=1}^{\infty}k_n(a,\cdots,a)z^n$,
where $k_n$ is the $n$-th free cumulant of $a$.
When $a$ is the real part of $X$, then the odd moments of $\mu_a$ vanish, therefore $\mu_a$
is a symmetric measure on $\mathbb{R}$.
By the linearity property of the free cumulant functions and the property of $R$-diagonal
operators mentioned above, we have
\begin{align}\nonumber
k_{2n}(a,\cdots,a)
         &=k_{2n}\left(\frac{X+X^*}{2},\cdots,\frac{X+X^*}{2}\right)\\  \nonumber
                &=\frac{1}{2^{2n}}\left[k_{2n}(X,X^*,\cdots,X,X^*)+k_{2n}(X^*,X,\cdots,X^*,X)\right] \nonumber
\end{align}
and $k_{2n+1}(a,\cdots,a)=0$.

Therefore, we have
\begin{equation}\label{RandRR}
\begin{split}
R_a(z)&=\sum_{n=1}^{\infty}k_n(a,\cdots,a)z^n\\
        &=\sum_{n=1}^{\infty}2\alpha_n\left(\frac{z}{2}\right)^{2n}
        =2f_X\left( \left( \frac{z}{2} \right)^2 \right)
\end{split}
\end{equation}

We recall that the above function is a little different from the $\mathcal{R}$-transform
defined in the Section \ref{boxplus}, see \cite{[NS1996], [VDN1992]} for details. 
In fact, we have $R_a(z)=z\mathcal{R}_{\mu_a}(z)$.
From the equations (\ref{fANDvarphi}) and (\ref{RandRR}), we obtain that
\begin{equation}\label{phiAndphiphi}
\begin{split}
\varphi_{\mu_a}(z)&=\mathcal{R}_{\mu_a}\left(\frac{1}{z}\right)=zR_a\left(\frac{1}{z}\right) \\
             &=2z f_X\left(\left(\frac{1}{2z}\right)^2\right) \\
             &=(2z)^3\varphi\varphi_{\mu_{X^*X}}\left(\left( 2z \right)^2\right)
\end{split}
\end{equation}
The above properties of the real part of $R$-diagonal operators is also true for unbounded operators.
\begin{prop}\label{phiandphi}
Let $X$ be a $R$-diagonal operator, $a$ be its real part, then $\mu_a$ is a symmetric measure on $\mathbb{R}$.
Moreover, we have $\varphi_{\mu_a}(z)=(2z)^3\varphi\varphi_{\mu_{X^*X}}((2z)^2)$ in some domain of
the form $\Gamma_{\alpha, \beta}$.
\end{prop}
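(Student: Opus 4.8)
The plan is to reduce the unbounded statement to the bounded identity \eqref{phiAndphiphi} by a truncation argument and then to pass to the limit with the help of Theorems \ref{conv1} and \ref{conv2}. Since both the real part $a$ and the measure $\mu_{X^*X}$ depend only on the $*$-distribution of $X$, I may assume $X=UH$ with $U$ a Haar unitary, $H\geq 0$, and $U$ and $H$ being $*$-free. Let $e_n$ denote the spectral projection of $H$ for the interval $[0,n]$, put $H_n=He_n$ and $X_n=UH_n$. Because $H_n\in W^*(H)$ and $U$ is $*$-free from $W^*(H)$, the operator $U$ is $*$-free from $H_n$; hence each $X_n$ is a bounded $R$-diagonal operator, and \eqref{phiAndphiphi} applies to it: writing $a_n$ for the real part of $X_n$, we have $\varphi_{\mu_{a_n}}(z)=(2z)^3\varphi\varphi_{\mu_{X_n^*X_n}}\big((2z)^2\big)$.

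Next I would record the two weak convergences needed to take the limit. Since $U$ is unitary, $X_n^*X_n=H_nU^*UH_n=H_n^2=H^2e_n$, and as $e_n\to 1$ strongly we get $X_n^*X_n\to H^2=X^*X$ in measure (the difference $H^2e_n-H^2$ vanishes after multiplication by $e_n$, while $\tau(1-e_n)\to 0$), whence $\mu_{X_n^*X_n}\to\mu_{X^*X}$ weakly. Similarly $H_n\to H$ in measure, and since addition, multiplication and the adjoint are continuous for the measure topology, $a_n=(X_n+X_n^*)/2\to a$ in measure; as $a_n$ and $a$ are self-adjoint, $\mu_{a_n}\to\mu_a$ weakly. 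The symmetry of $\mu_a$ is then immediate: each $\mu_{a_n}$ is symmetric because its odd moments vanish (as in the bounded discussion preceding this proposition), and a weak limit of symmetric measures is symmetric.

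It remains to pass to the limit in the bounded identity, and the only genuine care is needed in matching the domains of the two transforms. By Theorem \ref{conv1} there are $\alpha_1,\beta_1>0$ with $\varphi_{\mu_{a_n}}\to\varphi_{\mu_a}$ uniformly on compact subsets of $\Gamma_{\alpha_1,\beta_1}$, and by Theorem \ref{conv2} there are $\alpha_0,\beta_0>0$ with $\varphi\varphi_{\mu_{X_n^*X_n}}\to\varphi\varphi_{\mu_{X^*X}}$ uniformly on compact subsets of $\Lambda_{\alpha_0,\beta_0}$. The key observation is that for $z=x+iy$ with $y>\alpha\abs{x}$ and $\alpha\geq 1$ one has $(2z)^2=4(x^2-y^2)+8ixy$, so $\mathrm{Re}\,(2z)^2=4(x^2-y^2)<0$ while $\abs{(2z)^2}=4\abs{z}^2$; since any point with nonpositive real part lies outside $\Lambda_{\alpha_0}$, the map $z\mapsto(2z)^2$ sends $\Gamma_{\alpha,\beta}$ into $\Lambda_{\alpha_0,\beta_0}$ as soon as $4\beta^2>\beta_0$. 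Choosing $\alpha=\max\{\alpha_1,1\}$ and $\beta\geq\beta_1$ large enough that also $4\beta^2>\beta_0$, I obtain a single cone $\Gamma_{\alpha,\beta}$ on which $\varphi_{\mu_{a_n}}\to\varphi_{\mu_a}$ and whose image under $z\mapsto(2z)^2$ lies in $\Lambda_{\alpha_0,\beta_0}$. Fixing $z\in\Gamma_{\alpha,\beta}$ and letting $n\to\infty$ in $\varphi_{\mu_{a_n}}(z)=(2z)^3\varphi\varphi_{\mu_{X_n^*X_n}}\big((2z)^2\big)$, the left-hand side tends to $\varphi_{\mu_a}(z)$ and the right-hand side to $(2z)^3\varphi\varphi_{\mu_{X^*X}}\big((2z)^2\big)$, which is exactly the asserted identity. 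I expect this domain bookkeeping to be the main obstacle: one must guarantee that a single cone $\Gamma_{\alpha,\beta}$ lies inside the region of $\Gamma$-convergence and is carried by $z\mapsto(2z)^2$ into the region of $\Lambda$-convergence, so that both limit theorems can be invoked at the same points.
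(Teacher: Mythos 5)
Your proof is correct and follows essentially the same route as the paper's own: truncate to bounded $R$-diagonal operators (you use $X_n=UHe_n$, the paper uses $X_n=Xe_{|X|}([0,n))$, which is the same thing), apply the bounded identity (\ref{phiAndphiphi}), and pass to the limit via weak convergence together with Theorems \ref{conv1} and \ref{conv2}. Your explicit check that $z\mapsto(2z)^2$ carries a cone $\Gamma_{\alpha,\beta}$ with $\alpha\geq 1$ into $\Lambda_{\alpha_0,\beta_0}$ merely fills in domain bookkeeping that the paper leaves implicit.
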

\begin{proof}
If $X$ is bounded, the proposition follows from our previous
discussion. For the unbounded case, we use the truncated operators
$X_n:=Xe_{|X|}([0,n))$ to approximate $X$. Let $a_n$ be the real
part of $X_n$, then the argument in \cite{[BV1993]} tells us that
the sequence $\mu_{a_n}$ converges to $\mu_a$ weakly and the
sequence $\mu_{X_n^*X_n}$ converges to $\mu_{X^*X}$ weakly. The the
result follows from the results of Theorems
\ref{conv1} and \ref{conv2}.
\end{proof}

Roughly, the function $\varphi\varphi_{\mu}$ is very close to the function $\varphi_{\mu}$ for a probability measure $\mu$ on the positive real line.
The following results are from \cite{[BN2012]}.
\begin{lemma}\label{tight}
Let $\{\mu_n\}_{n=1}^{\infty}$ be a sequence of probability measures on $\mathbb{R}^+$.
If $\varphi\varphi_{\mu_n}(z)=o(z)$ as $z \rightarrow \infty$ in some domain $\Lambda_{\alpha, \beta}$, then $\{ \mu_n \}_{n=1}^{\infty}$ is a tight family.
\end{lemma}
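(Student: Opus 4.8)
The plan is to transport the uniform hypothesis $\varphi\varphi_{\mu_n}(z)=o(z)$ down the chain of transforms until it becomes a statement about the Cauchy transforms $G_{\mu_n}$ on the negative real axis, where tightness can be read off directly.

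First I would record the elementary fact that tightness of a family of measures on $[0,\infty)$ is equivalent to a uniform estimate on the Cauchy transform along the ray $(-\infty,0)$, which lies in every domain $\Lambda_{\alpha,\beta}$ once $|z|>\beta$. For $x>0$ one has $-xG_\mu(-x)=\int_0^\infty \frac{x}{x+t}\,d\mu(t)\le 1$, and therefore $1+xG_\mu(-x)=\int_0^\infty\frac{t}{x+t}\,d\mu(t)\ge \frac{1}{2}\,\mu((x,\infty))$, since $\frac{t}{x+t}\ge\frac{1}{2}$ for $t\ge x$. Thus $\mu_n((x,\infty))\le 2\,(1+xG_{\mu_n}(-x))$, and it suffices to prove that $xG_{\mu_n}(-x)\to -1$, equivalently $F_{\mu_n}(-x)/(-x)\to 1$, uniformly in $n$ as $x\to+\infty$.

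Next I would propagate the hypothesis through the definitions. From $\varphi_\mu(z)=F_\mu^{-1}(z)-z$ the defining formula for $V_\mu$ collapses to $V_\mu(z)=F_\mu^{-1}(z)/z^2$, so that $W_\mu(z)=z^2/F_\mu^{-1}(z)$. The hypothesis says $W_{\mu_n}^{-1}(z)=z+\varphi\varphi_{\mu_n}(z)=z+o(z)$ uniformly in $n$. Inverting (see the next paragraph) gives $W_{\mu_n}(z)=z+o(z)$ uniformly, whence $F_{\mu_n}^{-1}(z)=z^2/W_{\mu_n}(z)=z+o(z)$ uniformly; that is, $\varphi_{\mu_n}(z)=o(z)$ uniformly in $n$. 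Inverting once more yields $F_{\mu_n}(z)=z+o(z)$ uniformly, and in particular $F_{\mu_n}(-x)=-x(1+o(1))$ uniformly as $x\to+\infty$. Then $-xG_{\mu_n}(-x)=-x/F_{\mu_n}(-x)=1/(1+o(1))\to 1$ uniformly, which by the first paragraph forces $\sup_n\mu_n((x,\infty))\to 0$ and hence tightness.

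The one nonroutine ingredient, and the step I expect to be the main obstacle, is the inversion principle used twice above: if $\{\Phi_n\}$ are analytic on a common truncated cone and satisfy $\Phi_n(z)=z+\psi_n(z)$ with $\sup_n|\psi_n(z)|/|z|\to 0$ as $z\to\infty$, then the compositional inverses satisfy $\Phi_n^{-1}(z)=z+o(z)$ uniformly on a (possibly smaller) cone. I would prove this in the style of \cite{[BV1993]}: writing $\Phi_n^{-1}(z)=z+\rho_n(z)$, the identity $\rho_n(z)=-\psi_n(z+\rho_n(z))$ is solved by a fixed-point and bootstrap argument, using that $\psi_n$ is uniformly sublinear both to keep $z+\rho_n(z)$ inside the domain and to force $\rho_n(z)=o(z)$ uniformly. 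The delicate points are essentially bookkeeping: guaranteeing that the functions $\varphi\varphi_{\mu_n}$, $W_{\mu_n}$, $F_{\mu_n}^{-1}$ and $F_{\mu_n}$ are simultaneously defined on one cone, and that each inversion shrinks $(\alpha,\beta)$ only by a controlled amount, so that after two inversions the resulting domain $\Lambda_{\alpha',\beta'}$ still contains the ray $(-\infty,-\beta')$ on which the final Cauchy-transform estimate is applied.
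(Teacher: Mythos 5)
The paper itself gives no proof of this lemma: it is imported verbatim from \cite{[BN2012]} (``The following results are from \cite{[BN2012]}''), so there is no internal argument to compare yours against. Judged on its own terms, your proof is correct, and it is the natural transport to the $\varphi\varphi$-setting of the classical equivalence, from \cite{[BV1993]}, between tightness and the uniform sublinearity of the Voiculescu transforms. Your algebraic reductions check out: $V_\mu(z)=\bigl(z+\varphi_\mu(z)\bigr)/z^2=F_\mu^{-1}(z)/z^2$, hence $W_\mu(z)=z^2/F_\mu^{-1}(z)$, so uniform sublinearity passes from $\varphi\varphi_{\mu_n}$ to $\varphi_{\mu_n}$ with only one inversion in between, and a second inversion gives $F_{\mu_n}(z)=z+o(z)$ uniformly in $n$. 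Your endgame on the negative axis is also right: $1+xG_{\mu_n}(-x)=\int_0^\infty \frac{t}{x+t}\,d\mu_n(t)\geq \frac12\mu_n((x,\infty))$, and the ray $(-\infty,-\beta')$ lies in every $\Lambda_{\alpha',\beta'}$ because the excluded cone $\Lambda_{\alpha'}$ sits in the right half-plane, so the ray survives however much the inversions shrink the domain.

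Two points deserve explicit mention. First, the lemma as stated is false unless the hypothesis is read uniformly in $n$: for $\mu_n=\delta_n$ one computes $\varphi\varphi_{\delta_n}(z)\to n$ as $z\to\infty$, so each $\varphi\varphi_{\mu_n}$ is individually $o(z)$ while $\{\delta_n\}$ is not tight. You silently adopted the uniform reading, which is the intended one (compare the $\sup_n$ formulation in Theorem \ref{conv2}), but a referee would want this said. Second, your inversion principle is indeed the only nontrivial analytic ingredient; the fixed-point identity $\rho_n(z)=-\psi_n(z+\rho_n(z))$ is the standard device (as in Propositions 5.4--5.7 of \cite{[BV1993]}), though to run it one should add the routine step of a Cauchy estimate giving $\sup_n|\psi_n'|$ small on a slightly smaller truncated cone (the distance from a point of the smaller cone to the boundary of the larger one is comparable to $|z|$), which yields injectivity and lets the bootstrap close. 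One also needs the small identification step that the local inverse $\Psi_n$ produced for $\Phi_n=W_{\mu_n}^{-1}$ really is $W_{\mu_n}$; this follows by applying $W_{\mu_n}$ to the right-inverse relation $W_{\mu_n}^{-1}(\Psi_n(w))=w$. With those remarks incorporated, your argument is complete.
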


\begin{thm}\label{estimate}
For any probability measure $\mu$ on $\mathbb{R}^+$ we have
\begin{equation}\nonumber
\varphi\varphi_\mu(z)=z^2\left[G_\mu(z)-1/z \right](1+o(z))
\end{equation}
as $z
\rightarrow \infty$ within some domain $\Lambda_{\alpha, \beta}$.
Moreover, this estimate is uniform if $\mu$ varies in a tight family
of measures.
\end{thm}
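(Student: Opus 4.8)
The plan is to factor the estimate into two asymptotic equivalences: first $\varphi_\mu(z) \sim z^2[G_\mu(z)-1/z]$, and second $\varphi\varphi_\mu(z) \sim \varphi_\mu(z)$, each holding as $z \to \infty$ in a domain $\Lambda_{\alpha,\beta}$ with a multiplicative error $(1+o(1))$, and then to compose them. Throughout I would work with the identity
\[
z^2\left[G_\mu(z)-1/z\right] = \int_0^\infty \frac{zt}{z-t}\,d\mu(t),
\]
which follows from $zG_\mu(z)-1 = \int_0^\infty \frac{t}{z-t}\,d\mu(t)$; the value of writing things this way is that it exhibits $z^2[G_\mu(z)-1/z]$ as an explicit integral, from which one reads off both that it is $o(z)$ (so it lives on the same scale as $\varphi\varphi_\mu$) and that all the relevant bounds are uniform on tight families.

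First I would record the elementary asymptotics of the Cauchy transform. For $z$ ranging in $\Lambda_{\alpha,\beta}$ one checks $|z-t| \ge c|z|$ for all $t \ge 0$, so $zG_\mu(z) \to 1$; more importantly, if $\mu$ runs over a tight family, then splitting $\int \frac{t}{z-t}\,d\mu$ over $[0,R]$ and $(R,\infty)$ and using tightness to control the tail shows $\sup_\mu\bigl|\int \frac{t}{z-t}\,d\mu\bigr| \to 0$ as $z \to \infty$, uniformly in the family. Cauchy estimates applied to the analytic function $g_\mu := G_\mu - 1/z$ on $\Lambda_{\alpha,\beta}$ then give matching derivative bounds, which I will need in order to control compositions.

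Next, writing $G_\mu = 1/z + g_\mu$ with $zg_\mu(z)=o(1)$ uniformly, expanding $F_\mu = 1/G_\mu = z/(1+zg_\mu) = z - z^2 g_\mu(z)(1+o(1))$, and inverting gives the first equivalence. Here I would set up the inversion as the fixed-point relation $\varphi_\mu(z) = \psi_\mu\bigl(z+\varphi_\mu(z)\bigr)$, where $F_\mu(w) = w - \psi_\mu(w)$ and $\psi_\mu(w) = w^2 g_\mu(w)(1+o(1))$; since $\varphi_\mu(z)/z \to 0$ (recalled before Proposition \ref{phiandphi}), the shift $z \mapsto z+\varphi_\mu(z)$ is small relative to $z$, and the derivative bounds from the previous step let me conclude $\psi_\mu(z+\varphi_\mu(z)) = \psi_\mu(z)(1+o(1))$, whence $\varphi_\mu(z) = z^2 g_\mu(z)(1+o(1))$. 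The second equivalence is entirely parallel: from $V_\mu(z) = \frac1z\bigl(1+\frac1z\varphi_\mu(z)\bigr) = F_\mu^{-1}(z)/z^2$ and $\varphi_\mu(z)/z=o(1)$ one gets $W_\mu(z) = 1/V_\mu(z) = z - \varphi_\mu(z)(1+o(1))$, and inverting $W_\mu$ by the same fixed-point argument yields $\varphi\varphi_\mu(z) = \varphi_\mu(z)(1+o(1))$. Composing the two equivalences gives $\varphi\varphi_\mu(z) = z^2[G_\mu(z)-1/z](1+o(1))$, which is the asserted estimate.

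The main obstacle is the two inversion steps together with the uniformity claim. Showing that precomposition with $z \mapsto z+\varphi_\mu(z)$ (and with the analogous shift for $W_\mu$) alters the correction term only by a factor $1+o(1)$ requires quantitative control of $\psi_\mu'$ at the shifted point, not merely the pointwise smallness of $\psi_\mu$; this is precisely where the Cauchy estimates on $g_\mu$ enter. For the uniform version I must also guarantee that the right inverses $F_\mu^{-1}$ and $W_\mu^{-1}$ all exist on one common domain $\Lambda_{\alpha,\beta}$ as $\mu$ varies in the tight family, which I would obtain from the uniform estimate $\sup_\mu|zg_\mu(z)| \to 0$ via an argument-principle (Rouch\'e) argument establishing univalence of $F_\mu$ and $W_\mu$ and surjectivity onto a fixed cone complement. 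Once these domain issues are settled uniformly, every $o(1)$ above is dominated by $\sup_\mu\bigl|\int \frac{t}{z-t}\,d\mu\bigr|$, which yields the uniformity on tight families.
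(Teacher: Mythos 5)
The paper does not actually prove this theorem: it is imported verbatim from the preprint \cite{[BN2012]} (``The following results are from \cite{[BN2012]}''), so there is no internal proof to compare your attempt against; your proposal has to stand on its own. Judged that way, it is essentially correct, and it follows the route one would expect (it is the $\mathbb{R}^+$/$W_\mu$-analogue of the classical comparison of $\varphi_\mu(z)$ with $z^2[G_\mu(z)-1/z]$ over tight families from the Bercovici--Voiculescu and Bercovici--Pata papers cited here). Your ingredients all check out: the identity $z^2[G_\mu(z)-1/z]=\int_0^\infty \frac{zt}{z-t}\,d\mu(t)$ is right; on $\Lambda_{\alpha,\beta}$ one indeed has $|z-t|\geq c\max(|z|,t)$ for $t\geq 0$, so the splitting over $[0,R]$ and $(R,\infty)$ gives $\sup_\mu|zG_\mu(z)-1|\to 0$ uniformly on tight families, and this single quantity does dominate all the $o(1)$'s, as you claim; the Rouch\'e argument gives both the common inversion domain and, importantly, the a priori uniform bound $|\varphi_\mu(z)|\leq \varepsilon|z|$ that keeps your segments inside the domain where the Cauchy estimates hold. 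You correctly identified the one genuinely delicate point, the precomposition step, though your phrasing ``$\psi_\mu(z+\varphi_\mu(z))=\psi_\mu(z)(1+o(1))$'' is not quite how the estimate closes: $\psi_\mu$ can be much smaller than $z$, so smallness of $\psi_\mu$ alone gives nothing multiplicative. The correct bookkeeping, which your derivative bounds do support, is self-consistent: the Cauchy estimate gives $\sup|\psi_\mu'|=o(1)$ uniformly on a slightly shrunk $\Lambda_{\alpha',\beta'}$ (since $|\psi_\mu(u)|=o(|u|)$ uniformly and the disc radius is $\delta|u|$), whence $|\psi_\mu(z+\varphi_\mu(z))-\psi_\mu(z)|\leq o(1)\,|\varphi_\mu(z)|$; the fixed-point identity $\varphi_\mu(z)=\psi_\mu(z+\varphi_\mu(z))$ then first yields $|\varphi_\mu(z)|\leq(1+o(1))|\psi_\mu(z)|$ and only then the multiplicative conclusion $\varphi_\mu=\psi_\mu(1+o(1))$; the inversion of $W_\mu$ works identically with $\eta_\mu(u):=u-W_\mu(u)=\varphi_\mu(u)(1+o(1))$. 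With that bootstrapping spelled out, your two-step factorization composes to the stated estimate (and you were right to read the paper's ``$(1+o(z))$'' as a typo for ``$(1+o(1))$'').
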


\section{Convergence of series of $R$-diagonal operators}\label{convergence}
Now we prove our main theorem which is an analogue of the three series theorem Kolmogorov and L\'{e}vy. Note
that the condition $(4)$ in Theorem \ref{thm4.1} below involves only two numerical series, this is due to the circular symmetry of the $R$-diagonal operators.
\begin{thm}\label{thm4.1}
Let $(X_n)_{n=1}^{\infty}$ be a free sequence of $R$-diagonal operators.
The following conditions are equivalent.
\begin{enumerate}[$(1)$]
\item\label{01} $\sum_{n=1}^{\infty}X_n$ converges almost uniformly,
\item\label{02} $\sum_{n=1}^{\infty}X_n$ converges in measure,
\item\label{03} Let $S_n:=\sum_{k=1}^n X_k$, then the sequence of probability measures
$\{\mu_{S_n^*S_n}\}_{n=1}^{\infty}$ converges weakly,
\item\label{04} the following two series converge:
\begin{equation}\nonumber
\sum_{n=1}^{\infty}\tau(|X_n|^2 : |X_n|\leq 1), \sum_{n=1}^{\infty}\tau(e_{|X_n|}((1,+\infty))).
\end{equation}
\end{enumerate}
\end{thm}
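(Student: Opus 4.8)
The plan is to establish the cycle $(1)\Rightarrow(2)\Rightarrow(3)\Rightarrow(1)$ together with the separate equivalence $(3)\Leftrightarrow(4)$. The implication $(1)\Rightarrow(2)$ is immediate, since almost uniform convergence is stronger than convergence in measure. For $(2)\Rightarrow(3)$ I would use that $\widetilde{\mathcal{M}}$ is a topological $*$-algebra for the measure topology: if $S_n$ converges in measure to some $S$, then $S_n^*S_n\to S^*S$ in measure, and as these operators are self-adjoint their distributions converge weakly, which is precisely $(3)$ (with limit $\mu_{S^*S}$).

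The heart of the argument is $(3)\Leftrightarrow(4)$, which I would treat entirely through the transforms. Since each partial sum $S_n$ is again $R$-diagonal and $\boxplus_{RD}$ is linearized by $\varphi\varphi$, writing $\mu_k=\mu_{X_k^*X_k}$ gives $\varphi\varphi_{\mu_{S_n^*S_n}}=\sum_{k=1}^n\varphi\varphi_{\mu_k}$ on some $\Lambda_{\alpha,\beta}$; by Theorem \ref{conv2}, condition $(3)$ is equivalent to the uniform convergence on compact subsets of the series $\sum_k\varphi\varphi_{\mu_k}$, together with $\sup_n|\sum_{k\le n}\varphi\varphi_{\mu_k}(z)|=o(z)$. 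The last bound forces $\sup_k|\varphi\varphi_{\mu_k}(z)|=o(z)$, so Lemma \ref{tight} shows $\{\mu_k\}$ is tight and Theorem \ref{estimate} applies uniformly: $\varphi\varphi_{\mu_k}(z)=h_k(z)(1+\epsilon_k(z))$, where $h_k(z)=z^2(G_{\mu_k}(z)-1/z)$ and $\sup_k|\epsilon_k(z)|\to 0$. Evaluating on the imaginary axis, one computes $h_k(iy)=A_k-iB_k$ with $A_k=\int_0^\infty\frac{y^2t}{y^2+t^2}\,d\mu_k(t)\ge 0$ and $B_k=\int_0^\infty\frac{yt^2}{y^2+t^2}\,d\mu_k(t)\ge 0$. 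Fixing $y$ large enough that $\sup_k|\epsilon_k(iy)|\le 1/4$, the real series $\sum_k\big(\Re\varphi\varphi_{\mu_k}(iy)-\Im\varphi\varphi_{\mu_k}(iy)\big)$ has terms bounded below by $\tfrac12(A_k+B_k)\ge 0$; as it converges under $(3)$, I conclude $\sum_k A_k<\infty$ and $\sum_k B_k<\infty$.

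It then remains to match the sums $\sum A_k$ and $\sum B_k$ with the two series in $(4)$. Splitting each integral over $[0,1]$ and $(1,\infty)$ and using $t\le 1$ respectively $t\ge 1$ gives, with $m_k=\tau(|X_k|^2:|X_k|\le 1)$ and $p_k=\tau(e_{|X_k|}((1,\infty)))$, the two-sided bounds $c\,m_k\le A_k\le m_k+y^2p_k$ and $c\,p_k\le B_k\le m_k/y+y\,p_k$ for a constant $c>0$; hence $\sum A_k,\sum B_k<\infty$ is equivalent to $(4)$, which proves $(3)\Rightarrow(4)$. For $(4)\Rightarrow(3)$ I would note that $\sum p_k<\infty$ already makes $\{\mu_k\}$ tight, and that on $\Lambda_{\alpha,\beta}$ the elementary estimate $|h_k(z)|\le C(m_k+|z|\,p_k)$ gives absolute and uniform convergence of $\sum_k h_k$ on compact sets; combined with $\varphi\varphi_{\mu_k}=h_k(1+\epsilon_k)$ this yields uniform convergence of $\sum_k\varphi\varphi_{\mu_k}$, i.e. $(3)$ by Theorem \ref{conv2}.

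Finally, for $(3)\Rightarrow(1)$ I would pass to the self-adjoint, symmetric, and free parts $a_k=\Re X_k$ and $b_k=\Im X_k$. By Proposition \ref{phiandphi}, $\varphi_{\mu_{a_k}}(w)=(2w)^3\varphi\varphi_{\mu_k}((2w)^2)$, and the map $w\mapsto(2w)^2$ carries a suitable $\Gamma_{\alpha',\beta'}$ into the $\Lambda_{\alpha,\beta}$ on which $\sum_k\varphi\varphi_{\mu_k}$ converges; thus $\sum_k\varphi_{\mu_{a_k}}=\varphi_{\mu_{\Re S_n}}$ converges uniformly on compact subsets, so by Theorem \ref{conv1} the distributions $\mu_{\Re S_n}$ converge weakly, and likewise $\mu_{\Im S_n}$. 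I would then invoke the free three series theorem of \cite{[B2005]}, in the form that for a free self-adjoint sequence the weak convergence of the distributions of the partial sums is equivalent to the almost uniform convergence of the series: this gives almost uniform convergence of $\sum_k a_k$ and $\sum_k b_k$, and intersecting the two projections yields the almost uniform convergence of $\sum_k X_k$. The main obstacle is the step $(3)\Leftrightarrow(4)$, where a single convergent series of transforms must encode two numerical series; the resolution is to read the transforms on the imaginary axis, where $\Re h_k$ and $-\Im h_k$ are separately nonnegative and isolate the truncated second moment and the tail mass, with tightness established first so that Theorem \ref{estimate} may be used uniformly.
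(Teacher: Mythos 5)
Your proposal is correct in its main computations, but it takes a genuinely different route from the paper in two places, and one step needs patching. The paper closes the cycle as $(4)\Rightarrow(1)\Rightarrow(2)\Rightarrow(3)\Rightarrow(4)$, and the implication $(4)\Rightarrow(1)$ is done in one stroke by citing Batty's noncommutative three-series theorem \cite{[Batty79], [Jajte1985]}, using that $\tau(X_n:|X_n|\leq 1)=0$ by the circular symmetry of $R$-diagonal operators; no transform argument appears in that direction at all. You never use Batty: you prove $(4)\Rightarrow(3)$ by summing the estimates $|\varphi\varphi_{\mu_k}|\leq 2|h_k|\leq C(m_k+|z|p_k)$, and then $(3)\Rightarrow(1)$ by applying \cite{[B2005]} to the free self-adjoint sequences $(a_k)$ and $(b_k)$ and intersecting projections --- the paper invokes \cite{[B2005]} in exactly this form, but only as an intermediate step \emph{inside} its proof of $(3)\Rightarrow(4)$. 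Within $(3)\Rightarrow(4)$ you diverge again, to your advantage: the paper obtains tightness of $\{\mu_k\}$ circuitously (weak convergence of $\mu_{A_n}$, then \cite{[B2005]} gives almost uniform convergence of $A_n$, hence $a_n,b_n\to 0$ in measure, hence $X_n^*X_n\to 0$ in measure), and then evaluates (\ref{sum}) at $z=-y$ on the negative half-axis, needing the symmetry of $\mu_{a_n}$ and the relation of Proposition \ref{phiandphi} to show $\varphi\varphi_{\mu_n}(-y)<0$ so that the terms have a fixed sign. You get tightness in one line, from $\sup_k|\varphi\varphi_{\mu_k}(z)|\leq 2\sup_n|\varphi\varphi_{\nu_n}(z)|=o(z)$ and Lemma \ref{tight}, and your evaluation at $z=iy$, with $h_k(iy)=A_k-iB_k$, $A_k,B_k\geq 0$, and $\Re-\Im\geq\tfrac12(A_k+B_k)$ once $\sup_k|\epsilon_k(iy)|\leq\tfrac14$, replaces the sign argument by explicit error control; it also separates the two series of $(4)$ more transparently ($A_k$ tracks the truncated second moment, $B_k$ the tail mass), where the paper extracts both from the single quantity $\int_0^\infty \frac{yt}{y+t}\,d\mu_n(t)$. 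I verified your two-sided comparisons $c\,m_k\leq A_k\leq m_k+y^2p_k$ and $c\,p_k\leq B_k\leq m_k/y+y\,p_k$; they are correct (recall $t$ is the spectral variable of $|X_k|^2$, so the cutoff $|X_k|\leq 1$ is $t\leq 1$, which you used correctly).

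The soft spot is $(4)\Rightarrow(3)$. First, Theorem \ref{estimate} gives $\varphi\varphi_{\mu_k}=h_k(1+\epsilon_k)$ only as $z\to\infty$, so absolute and uniform convergence of $\sum_k\varphi\varphi_{\mu_k}$ follows only on a truncated region $\Lambda_{\alpha,R}$ with $R$ large; this is harmless, since $\Lambda_{\alpha,R}$ is again of the admissible form, but you should say it. More seriously, applying Theorem \ref{conv2} to $\{\nu_n\}$ presupposes that the transforms $\varphi\varphi_{\nu_n}$ are all defined, and equal to $\sum_{k\leq n}\varphi\varphi_{\mu_k}$, on one common $\Lambda_{\alpha,\beta}$; before weak convergence (hence tightness) of $\{\nu_n\}$ is known, these domains could a priori shrink with $n$. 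The gap is repairable: your bounds in fact give $\sup_n|\varphi\varphi_{\nu_n}(z)|\leq 2C\bigl(\sum_k m_k/|z|+\sum_k\int_{(1,\infty)}\frac{t}{|z|+t}\,d\mu_k(t)\bigr)\cdot|z|=o(z)$ uniformly in $n$ (dominated convergence in the $k$-sum, using $\sum_k p_k<\infty$), and a uniform $o(z)$ bound is exactly what controls the domain on which $W_{\nu_n}^{-1}$ exists in the arguments of \cite{[BN2012]} and \cite{[BV1993]} --- but this needs to be stated, and it is presumably why the paper routes $(4)\Rightarrow(1)$ through Batty's theorem, which avoids the issue entirely. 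Your $(3)\Rightarrow(1)$ is fine, including the unstated point that $\mu_{\Im S_n}=\mu_{\Re S_n}$ (since $\Im S_n=\Re(-iS_n)$ and $(-iS_n)^*(-iS_n)=S_n^*S_n$), and the proposal as a whole is at, or slightly above, the paper's own level of rigor on the analytic-continuation points it shares with the paper.
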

\begin{proof}
Suppose (\ref{04}) is true, for $R$-diagonal operators $X_n$, we
have $\tau(X_n:|X_n|\leq1)=0$. (\ref{04}) $\Rightarrow$
(\ref{01}) follows from a result of Batty \cite{[Batty79], [Jajte1985]}.

(\ref{01})$\Rightarrow$ (\ref{02}) follows directly from
definitions. Assume we have (\ref{02}), by the properties of
convergence in measure given in \cite[Theorem 1]{[Nel1974]}, 
the adjoint operation and the joint multiplication are uniformly
continuous on bounded sets in the measure topology, thus (\ref{02})
implies $S_n^*S_n$ converges in measure. Therefore, (\ref{03}) is
true.

Now we concentrate on the proof of (\ref{03})$\Rightarrow$ (\ref{04}).
We have $S_n=\sum_{k=1}^n X_k$, and that the sequence of probability measures
$\{\mu_{S_n^*S_n}\}_{n=1}^{\infty}$ converges weakly.
Let us denote by $\mu_n$ the distribution of $X_n^*X_n$, and denote by $\nu_n$ the distribution of $S_n^*S_n$. We also
let $\nu$ be the limit distribution of $\nu_n$. Then by the result in Section 3 we have
\begin{equation}\label{sum}
\varphi\varphi_{\nu_n}=\varphi\varphi_{\mu_1}+\cdots+\varphi\varphi_{\mu_n}
\end{equation}
in some domain. By Theorem \ref{conv2}, there is a domain of the
form $\Lambda_{\alpha, \beta}$ where all the functions
$\varphi\varphi_{\nu_n}, \varphi\varphi_\nu$ are defined and
moreover $\lim_{n\rightarrow
\infty}\varphi\varphi_{\nu_n}(z)=\varphi\varphi_\nu(z)$ 
uniformly on the compact subsets of $\Lambda_{\alpha,\beta}$
and $\varphi\varphi_{\nu_n}(z)=o(z)$ uniformly as $z \rightarrow \infty$
in the domain of the form $\Lambda_{\alpha, \beta}$.

Let $a_n$ be the real part of $X_n$ and $b_n$ be the imaginary part of $X_n$,
and set $A_n=\sum_{k=1}^{n}a_k$.
The convergence of the functions $\varphi\varphi_{\nu_n}$ to the function $\varphi\varphi_{\nu}$
and Proposition \ref{phiandphi} imply that the functions $\varphi_{\small{A_n}}$ converges to the 
function $\varphi_{\small{\Re (S)}}$ 
uniformly on compact subsets of a domain of the form $\Gamma_{\alpha',\beta'}$
for some operator $S$.
The main result of \cite{[B2005]} implies that $A_n$ converges almost uniformly.  
This implies that $a_n$ converges to $0$ in the measure topology and 
likewise $b_n$ converges to $0$ in the measure topology.
Therefore, $X_n^*X_n$ converges to $0$ in the measure topology and 
the probability measures $\{\mu_{n}\}_{n=1}^{\infty}$ form a tight family. 
By the estimate in Theorem \ref{estimate}, we have
\begin{equation}\label{ineqphi}
\left|z^2\left[G_{\mu_n}(z)-\frac{1}{z}\right]\right| \leq
2|\varphi\varphi_{\mu_n}(z)|
\end{equation}
for $z $ in the domain $\Lambda_{\alpha, \beta}$ which is large
enough.

For $z=-y$ with $y>0$, we have
\begin{align}\nonumber
z^2\left[G_{\mu_n}(z)-\frac{1}{z}\right]&=y^2\int_0^{\infty}\left[\frac{1}{-y-t}-\frac{1}{-y}\right]d\,\mu_n(t)\\
\nonumber
                 &=\int_0^{\infty}\frac{yt}{y+t}d\,\mu_n(t) \nonumber
\end{align}
which is always positive. 
On the other hand, by Proposition \ref{phiandphi} we have
$\varphi_{\mu_{a_n}}(z)=(2z)^3\varphi\varphi_{\mu_n}((2z)^2)$
where $\mu_{a_n}$ is the distribution of $a_n$. By the same
proposition, $\mu_{a_n}$ is symmetric, we have
$G_{\mu_{a_n}}(z)=-\overline{G_{\mu_{a_n}}(-\overline{z})}$,
therefore we obtain
\begin{equation}\nonumber
\varphi_{\mu_{a_n}}(z)=-\overline{G_{\mu_{a_n}}(-\overline{z})}
\end{equation}
in the domain where $\varphi_{\mu_{a_n}}$ is defined. In particular,
$\varphi_{\mu_{a_n}}(i\sqrt{y}/2)$ is purely imaginary for $y>0$.
This implies $\varphi\varphi_{\mu_n}(-y)$ is always negative.

We choose $y$ large enough such that when $z=-y$ we have estimate as
in (\ref{ineqphi}). Moreover, the terms in the right hand side of
the equation (\ref{sum}) are all negative, and their summation is
convergent. Therefore, the series
\begin{equation}\nonumber
\sum_1^{\infty}\int_0^{\infty}\frac{yt}{y+t}d\,\mu_n(t)
\end{equation}
is also convergent. Now we obtain the inequality,
\begin{equation}\nonumber
\sum_{n=1}^{\infty}\tau(|X_n|^2 : |X_n|\leq 1) +
\sum_{n=1}^{\infty}\tau(e_{|X_n|}((1,+\infty))) \leq
(y+1)\int_0^{\infty}\frac{t}{y+t}d\mu_n(t),
\end{equation}
which implies (\ref{04}). The proof is complete.
\end{proof}
\section*{Acknowledgments}
The first author was partially supported by a grant from the National Science Foundation.


\begin{thebibliography}{99}
\bibitem{[Batty79]}C.J.K.Batty, \emph{The strong law of large numbers for states and traces of $W^*$-algebra}, Z. Wahrsch. Verw. Gebiete 48(1979), 177-191.
\bibitem{[B2005]}H.Bercovici, \emph{Series of Free Random Variables}, J. Theoret. Probab. 18(2005), no.4, 957-965.

\bibitem{[BP1999]}H.Bercovici and V.Pata, \emph{Stable laws and
domains of attraction in free probability theory. With an appendix
by Philippe Biane}, Ann. of Math. (2) 149 (1999), no. 3, 1023–1060.

\bibitem{[BP2000]}H.Bercovici and V.Pata, \emph{Functions of regular
variation and freely stable laws}. Ann. Mat. Pura Appl. (4) 178
(2000), 245–269.

\bibitem{[BN2012]}H.Bercovici, A.Nica and M.Noyes, \emph{$R$-diagonal convolution on the positive half-axis },
preprint.

\bibitem{[BV1993]}H.Bercovici and D.Voiculescu, \emph{Free convolution of measures with unbounded support}, Indiana Univ. Math. J. 42(1993), 733-773.

\bibitem{[HS2007]}U.Haagerup and H.Schultz, \emph{Brown measures of unbounded operators affiliated with a finite von Neumann algebra},
    Math. Scand. 100(2007), no.2, 209-263.
\bibitem{[Jajte1985]}R.Jajte, \emph{Strong limit theorems in noncommutative probability}, Lecture Notes in Mathematics, Vol. 1110, Springer, New York.
\bibitem{[Nel1974]}E.Nelson, \emph{Notes on non-commutative integration}, J. Funct. Anal. 15(1974), 103-116.
\bibitem{[NS1997]}A.Nica and R.Speicher, \emph{$R$-diagonal pairs : a common approach to Haar unitaries and circular elements},
Fields Inst. Commun. (1997), 149-188.
\bibitem{[NS1996]}A.Nica and R.Speicher, \emph{Lectures on the combinatorics of free probability}, London Mathematical Society Lecture Note Series, 335. Cambridge University Press, Cambridge, 2006.

\bibitem{[Takesaki03]}M.Takesaki, \emph{Theory of operator algebras. II}, Encyclopaedia of Mathematical Sciences, 125. Operator Algebras and Non-commutative Geometry, 6. Springer-Verlag, Berlin, 2003.

\bibitem{[VDN1992]}D.Voiculescu, K.J.Dykema and A.Nica, \emph{Free Random Variables}, CRM Monograph Series, vol.1, AMS, Rhode Island, 1992.
\bibitem{[DVV1985]}D.Voiculescu, \emph{Symmetries of some reduced free
product $C^*$-algebras}, Operator algebras and their connections with
topology and ergodic theory (1983), Lecture
Notes in Math., 1132, Springer, Berlin, 1985.


\end{thebibliography}
\end{document}